\documentclass[11pt]{amsart}
\usepackage{geometry}                % See geometry.pdf to learn the layout options. There are lots.
\geometry{a4paper}                   % ... or a4paper or a5paper or ... 
\usepackage{graphicx}
\usepackage{amssymb}
\usepackage{epstopdf}

\newtheorem{theorem}{Theorem}
\newtheorem{remark}[theorem]{Remark}
\newtheorem{lemma}[theorem]{Lemma}
\newtheorem{corollary}[theorem]{Corollary}

\newcommand{\func}[1]{\operatorname{#1}}

\title{On contractible edges in convex decompositions}

\author{Ferran Hurtado \and Eduardo Rivera-Campo}
\thanks{Ferran Hurtado worked at Universitat Polit\`ecnica de Catalunya until his death in 2014}
\thanks{Eduardo Rivera-Campo, Universidad Aut\'onoma Metropolitana-Iztapalapa, {\tt erc@xanum.uam.mx}}
\thanks{Partially supported by Conacyt, M\'exico}
\date{}                                         

\begin{document}

\begin{abstract}

Let $\Pi$ be a convex decomposition of a set $P$ of $n\geq 3$ points in general position in the plane. If $\Pi$ consists of more than one polygon, then either $\Pi$ contains a deletable edge or $\Pi$ contains a contractible edge.
\end{abstract}

\maketitle
\section{Introduction}

Let $P$ be a set of $n\geq 3$ points in general position in the plane. A \emph{convex decomposition} of $P$ is a set $\Pi$ of convex polygons with vertices in $P$ and pairwise disjoint interiors such that their union is the convex hull $CH(P)$ of $P$ and that no point in $P$ lies in the interior of any polygon in $\Pi$.  A \emph{geometric graph} with vertex set $P$ is a graph $G$, drawn in the plane in such a way that every edge is a straight line segment with ends in $P$.  

Let $\Pi$ be a convex decomposition of $P$.  We denote by $G(\Pi)$ the \emph{skeleton graph} of $\Pi$, that is the plane geometric graph with vertex set $P$ in which the edges are the sides of all polygons in $\Pi$.  An edge $e$ of $\Pi$ is an \emph{interior edge} if $e$ is not an edge of the boundary of  $CH(P)$. 

An interior edge $e$ of $\Pi$ is \emph{deletable} if the geometric graph $G(\Pi) - e$, obtained from $G(\Pi)$ by deleting the edge $e$, is the skeleton graph of a convex decomposition of $P$. Neumann-Lara \emph{et al} \cite{NRU} proved that if a convex decomposition $\Pi$ of a set $P$ of $n$ points consists of more that $\frac{3n-2k}{2}$ polygons, where $k$ is the number of vertices of $CH(P)$, then $\Pi$ has at least one deletable edge.

 An interior edge $e=uv$ of $\Pi$ is \emph{contractible} from $u$ to $v$ if the geometric graph
 $G(\Pi) / \vec{uv}=(G(\Pi)- \{x_1u, x_2u, \ldots, x_mu, uv\}) + \{x_1v, x_2v, \ldots x_mv\}$ is a skeleton graph of a convex decomposition of $P \setminus \{u\}$, where $x_1,x_2, \ldots, x_m$ are the remaining vertices of $G(\Pi)$ which are adjacent to $u$.
 
A \emph{simple convex deformation} of $\Pi$ is a convex decomposition $\Pi'$ obtained from $\Pi$ by moving a single point $x$ along a  straight line segment, together with all the edges incident with $x$, in such a way that at each stage we have a convex decomposition of the corresponding set of points. Deformations of plane graphs have been studied by several authors, both theoretically  and algorithmically, see for instance \cite{C, GS, T} and \cite{BHL, BlPS, GHS}, respectively.

Let $P_1$ and $P_2$ be sets of $n \geq 3$  points in general position in the plane. A convex decomposition  $\Pi_1$ of $P_1$ and a convex decomposition $\Pi_2$ of $P_2$ are \emph{isomorphic} if there is an isomorphism of $G(\Pi_1)$ onto $G(\Pi_2)$, as abstract plane graphs, such that the boundaries of $CH(P_1)$ and $CH(P_2)$ correspond to each other with the same orientation. 

Thomassen \cite{T} proved that if $\Pi_1$ and $\Pi_2$ are \emph{isomorphic} convex decompositions, then  $\Pi_2$ can be obtained from  $\Pi_1$ by a finite sequence of simple convex deformations. As a tool, Thomassen proved that if $\Pi$ is a convex decomposition with at least two polygons, then there is an isomorphic convex decomposition $\Pi'$ that can be obtained from $\Pi$  by a finite number of simple convex deformations that preserve the boundary and such that $\Pi'$ contains either a deletable edge or a contractible edge. In this note we prove that every convex decomposition $\Pi$ with at least two polygons contains an edge which is deletable or contractible. Furthermore, if $P$ contains at least one interior point, then $\Pi$ contains a contractible edge.

\section{Preliminary results}

Let $\Pi $ be a convex decomposition of $P$ containing no deletable edges.
For every interior edge $e$ of $G\left( \Pi \right) $, the graph $G\left(\Pi \right) -e$ has an internal face $Q_e$ which is not convex and at least one end of $e$ is a reflex vertex of $Q_e$.

We define an abstract directed graph $\overrightarrow{G\left( \Pi \right) }$
with vertex set $P$ in which $\overrightarrow{uv}$ $\in $ $A\left( 
\overrightarrow{G\left( \Pi \right) }\right) $ if and only if $u$ is a
reflex vertex of $Q_{uv}$. Notice that for each interior edge $uv$ of $%
G\left( \Pi \right) $, the directed graph $\overrightarrow{G\left( \Pi
\right) }$ contains at least one of the arcs $\overrightarrow{uv}$ and $%
\overrightarrow{vu}$ (see Fig. \ref{fig:CD1}).

\begin{figure}
\begin{center}
 \includegraphics[width=.9\textwidth]{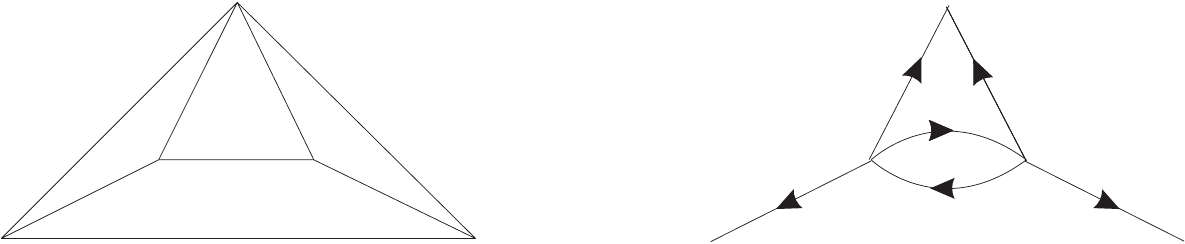}
 \caption{A Convex partition $\Pi$ and the corresponding directed graph $\protect\overrightarrow{G\left( \Pi \right) }$. }
  \label{fig:CD1} 
\end{center}
\end{figure}

\begin{remark}
\end{remark}

\begin{enumerate}

\item The outdegree of every vertex $u$ of $\overrightarrow{G\left( \Pi \right) 
}$ is at most $3$.

\item The outdegree of every vertex $u$ in the boundary of $CH\left( P\right) $
is $0$.

\item An interior vertex $u$ of $\Pi $ has outdegree 3 in $\overrightarrow{%
G\left( \Pi \right) }$ if and only if $u$ has degree 3 in $G\left( \Pi
\right) $.

\item If $\overrightarrow{uv},\overrightarrow{uw}$ $\in $ $A\left( 
\overrightarrow{G\left( \Pi \right) }\right) $, then $uv$ and $uw$ lie in a
common face of $G\left( \Pi \right) $.\\

\end{enumerate}

For two points $\alpha $ and $\beta $ in the plane, we denote by $r\left(
\alpha \beta \right) $ the ray, with origin $\alpha $, that contains the
segment $\alpha \beta $.

\begin{lemma}
\label{nocontr}
An edge $uv$ of $\Pi $ is not contractible from $u$ to $v$ if and only if
there are edges $yx$ and $xu$, lying in a common face of $G\left( \Pi
\right) $ that contains vertex $u$, such that the ray $r\left( yx\right) $
meets the edge $uv$ at point $u_{t}$, with $u\neq u_{t}\neq v$, and that the
triangular region defined by $x$, $u_{t}$ and $u$ contains no point of $P$
in its interior.
\end{lemma}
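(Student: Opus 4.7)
The plan is to prove the two directions of the biconditional separately, with the direction ($\Leftarrow$) being a direct geometric consequence of the stated positional hypothesis, and the direction ($\Rightarrow$) following from a continuous ``sweep'' argument that produces the witnessing configuration at a first critical moment.

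For the direction ($\Leftarrow$), I would assume the existence of $y$, $x$, and $u_t$ as in the statement. Since $F$ is a convex face of $G(\Pi)$ having $yx$ and $xu$ as consecutive boundary edges at $x$, the interior angle of $F$ at $x$ is strictly less than $\pi$, which places $u$ on one specific side of the line $\ell$ through $y$ and $x$. The hypothesis that the ray $r(yx)$ meets the segment $uv$ at an interior point $u_t$ forces $v$ onto the opposite side of $\ell$. In the contracted graph $G(\Pi)/\vec{uv}$, the face inherited from $F$ has $yx$ and $xv$ as consecutive boundary edges at $x$, and since $v$ is on the wrong side of $\ell$, its interior angle at $x$ is at least $\pi$. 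Hence this face is not convex, so $G(\Pi)/\vec{uv}$ cannot be the skeleton graph of any convex decomposition of $P\setminus \{u\}$, and $uv$ is not contractible from $u$ to $v$. The emptiness hypothesis is not used in this direction.

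For the direction ($\Rightarrow$), I would parametrize the motion of $u$ along $uv$ by $u_\lambda = (1-\lambda)u + \lambda v$ for $\lambda\in[0,1]$ and consider, for each $\lambda$, the plane graph $G_\lambda$ obtained from $G(\Pi)$ by replacing $u$ by $u_\lambda$ and each edge $x_iu$ by $x_iu_\lambda$. At $\lambda=0$ this is $G(\Pi)$ itself, while the failure of contractibility means $G_\lambda$ eventually ceases to be a convex decomposition of $(P\setminus\{u\})\cup\{u_\lambda\}$. Let $\lambda^\ast$ be the infimum of values of $\lambda$ at which $G_\lambda$ fails to be such a decomposition. The first local failure at $\lambda^\ast$ is either that the interior angle of some face at a vertex $x$ adjacent to $u_{\lambda^\ast}$ reaches $\pi$ (so the other neighbor $y$ of $x$ in that face becomes collinear with $x$ and $u_{\lambda^\ast}$), or that a point of $P\setminus\{u\}$ becomes incident to an edge of $G_\lambda$. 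In the first case, $yx$ and $xu$ are consecutive edges of a common face $F$ of $G(\Pi)$ containing $u$, and the ray $r(yx)$ passes through $u_t := u_{\lambda^\ast}$, giving precisely the witnessing configuration.

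The emptiness of the triangle $xu_tu$ then follows from the minimality of $\lambda^\ast$: any $p\in P$ in the interior of this triangle would have been swept by a deforming edge $x_ju_\lambda$ (or by the line $\ell$ rotating at $x$) at some earlier parameter $\lambda'<\lambda^\ast$, producing an incidence failure before $\lambda^\ast$ and contradicting minimality. The second case (a $P$-point first becoming incident) is handled analogously: the first such point $p$ plays the role of $x$, and a suitable neighbor of $p$ in $G(\Pi)$ plays the role of $y$. The principal obstacle in this direction is the careful case analysis at the critical parameter $\lambda^\ast$, in particular verifying that the edges $yx$ and $xu$ produced at the critical event lie in a \emph{common} face of $G(\Pi)$ containing $u$; this will use the fact that in a convex decomposition any two consecutive edges at a vertex automatically bound a common face, so the obstruction is encoded locally around $u$.
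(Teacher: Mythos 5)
Your overall strategy coincides with the paper's: for the hard direction you slide $u$ continuously along $uv$, locate the first parameter at which the deformed configuration fails to be a convex decomposition, read off the witnessing pair $yx$, $xu$ from the collinearity that occurs there, and obtain the emptiness of the triangle $xu_tu$ because it is exactly the region swept by the moving edge before the first failure. That part is sound and matches the paper essentially step for step.

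There is, however, one step that does not work as you describe it. In your case analysis at the critical parameter $\lambda^{\ast}$ you allow a second type of event, namely a point $p\in P\setminus\{u\}$ becoming incident to a moving edge $x_ju_{\lambda}$, and you propose to handle it by letting ``$p$ play the role of $x$ and a suitable neighbor of $p$ play the role of $y$.'' This cannot produce the configuration demanded by the lemma: the lemma requires $xu$ to be an edge of $G(\Pi)$, and the swept point $p$ need not be adjacent to $u$ at all. The correct resolution is to show that this second type of event can never be the \emph{first} failure: just before $p$ lands on $x_ju_{\lambda}$, the point $p$ lies on the far side of that edge, inside the closure of the adjacent face $F''$; if $p$ is not a vertex of $F''$ this already violates the decomposition at an earlier time, and if $p$ is a vertex of $F''$ then convexity of $F''$ forces some interior angle at a vertex adjacent to $u_{\lambda}$ in $F''$ to pass through $\pi$ strictly earlier, which is exactly an event of your first type. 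So the second case should be eliminated, not ``handled analogously''; the paper simply asserts that the first failure is a collinearity of two edges of a common face, which is the statement you need to prove here. A smaller remark on the easy direction: your claim that the emptiness hypothesis is not used is misleading, since your assertion that $yx$ and $xv$ are \emph{consecutive} edges of the inherited face at $x$ relies on there being no edge of $G(\Pi)$ from $x$ into the sector between $xu$ and $xu_t$, and that is precisely what the emptiness of the triangle $xu_tu$ (together with general position) guarantees.
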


\begin{proof}
It is easy to see that the existence of such edges $yx$ and $xu$ implies that the edge $uv$ cannot be contracted from $u$ to $v$; we proceed to prove the sufficiency part of the lemma. Let $uv$ be an interior edge of $\Pi $ with $u$ not lying in the boundary of 
$CH\left( \Pi \right) $ and let $x_{1},x_{2},\ldots ,x_{m}$ be the remaining
vertices of $G\left( \Pi \right) $ which are adjacent to $u$. We contract
the edge $uv$ in a continuous way as follows: Slide the point $u$ along the
ray $r\left( uv\right) $, together with the edges $x_{1}u,x_{2}u,\ldots
,x_{m}u$ (see Fig. \ref{fig:CD2}).

\begin{figure}
\begin{center}
 \includegraphics[width=.9\textwidth]{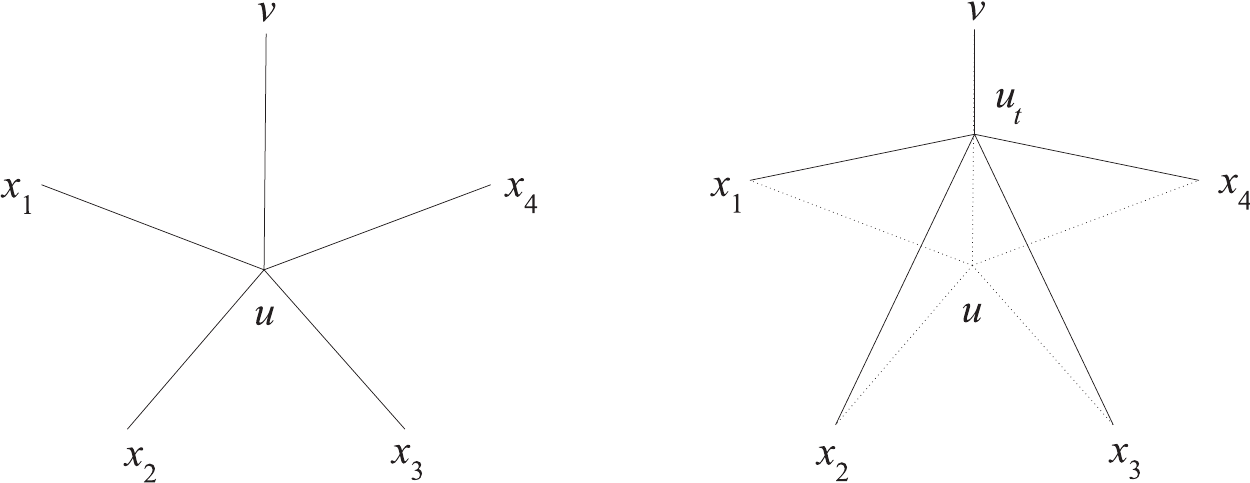}
 \caption{Contracting an edge $uv$ continuously.}
  \label{fig:CD2} 
\end{center}
\end{figure}

If $uv$ is not contractible from $u$ to $v$, then either the transformed
graph $T\left( G\left( \Pi \right) \right) $ becomes non planar or one of
its faces becomes non convex. This implies that we must reach a point $u_{t}=
$ $u+t\left( v-u\right) $, with $0<t<1$, such that there are two edges $
yx_{i}$ and $x_{i}u_{t}$ lying in a common face, which become collinear in $
T\left( G\left( \Pi \right) \right) $ (see Fig. \ref{fig:CD3}). 

Notice that two or more different pairs of edges 
$yx_{i}$,  $x_{i}u_{t}$ and  $y'x_{j}$, $x_{j}u_{t}$ may become collinear simultaneusly;
in such a case we may choose any of those pairs and proceed with the proof.

\begin{figure}
\begin{center}
 \includegraphics[width=.9\textwidth]{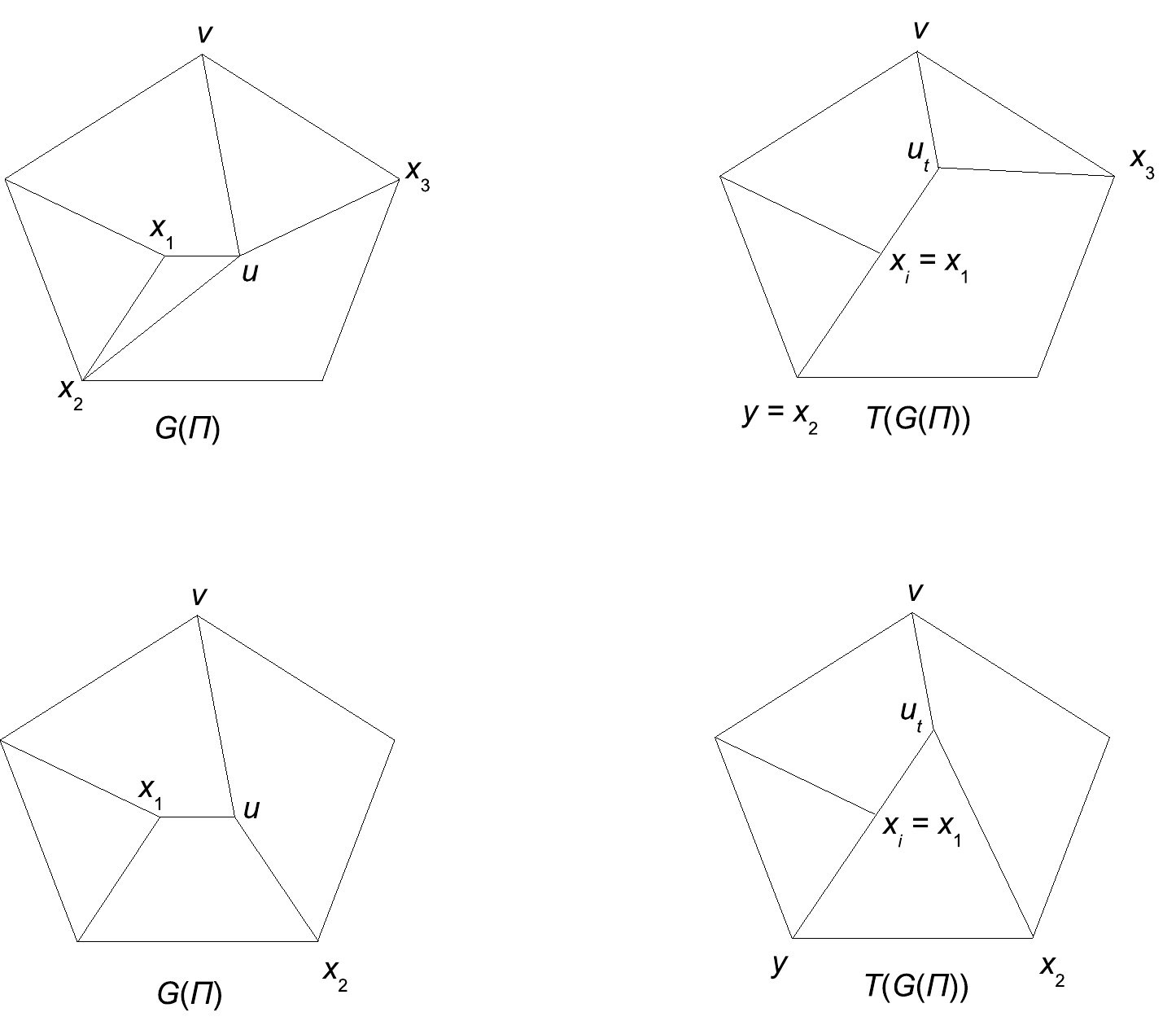}
 \caption{Edges $yx$ and $xu_t$ become collinear. }
  \label{fig:CD3} 
\end{center}
\end{figure}

The triangular region defined by $x_i$, $u_t$ and $u$ is the region swept by
the edge $x_iu_s$, $0 \leq s \leq t$ and therefore it contains no point of $P$ in its interior.
The lemma follows since the edges $yx_i$ and $x_iu$ lie in a common face of $%
G\left( \Pi \right) $ and the ray $r\left( yx_i\right) $ meets the edge $uv$
at the point $u_t$. 
\end{proof}

Let $N$ denote the set of arcs $\overrightarrow{uv}$ of $\overrightarrow{%
G\left( \Pi \right) }$ such that the edge $uv$ is not contractible from $u$
to $v$ in $\Pi $. For each $\overrightarrow{uv}\in N$ let $y=y_{uv}$, $%
x=x_{uv}$ and $u_t$ be as in Lemma \ref{nocontr}. Since the edges $y_{uv}x_{uv}$ and $%
x_{uv}u$ lie in a common face of $G\left( \Pi \right) $ and the triangular
region, defined by $x_{uv}$, $u_t$ and $u$, contains no point of $P$ in its
interior, the geometric graph $G\left( \Pi \right) -x_{uv}u$ contains a face $%
Q_{x_{uv}u}$ in which $x_{uv}$ is a reflex vertex and therefore $\overrightarrow{x_{uv}u}%
\in A\left( \overrightarrow{G\left( \Pi \right) }\right) $. This defines a
function $f:N\longrightarrow A\left( \overrightarrow{G\left( \Pi \right) }%
\right) $ given by $f\left( \overrightarrow{uv}\right) =\overrightarrow{%
x_{uv}u}$.

Notice that the arcs $f\left( \overrightarrow{uv}\right) $ and $%
\overrightarrow{uv}$ form a directed path in $\overrightarrow{G\left( \Pi
\right) }$ with length 2 and middle vertex $u$. This implies that if $%
f\left( \overrightarrow{u_1v_1}\right) =f\left( \overrightarrow{u_2v_2}%
\right) $, then $u_1=u_2$. Moreover, if $uv_1,uv_2$ and $uv_3$ are distinct
arcs such that $f\left( \overrightarrow{uv_1}\right) =f\left( 
\overrightarrow{uv_2}\right) =f\left( \overrightarrow{uv_3}\right) =%
\overrightarrow{xu}$, then $u$ is adjacent in $G\left( \Pi \right) $ to $%
v_1,v_2,v_3$ and to $x$, which is not possible by Remark 1, since $u$ has
outdegree 3 in $\overrightarrow{G\left( \Pi \right) }$. It follows that
there are no three arcs in $N$ with the same image under the function $f$
and therefore $\left|\func{Im}\left( f\right)\right|=\left|N\right|-\left|U\right|$, 
where $U$ is the set of points $u$ of $P$ for which
there is a pair of arcs $\overrightarrow{uv},\overrightarrow{uw}\in N$ such
that $f\left( \overrightarrow{uv}\right) =f\left( \overrightarrow{uw}\right) 
$.

\newpage

\begin{figure}
\begin{center}
 \includegraphics[width=.6\textwidth]{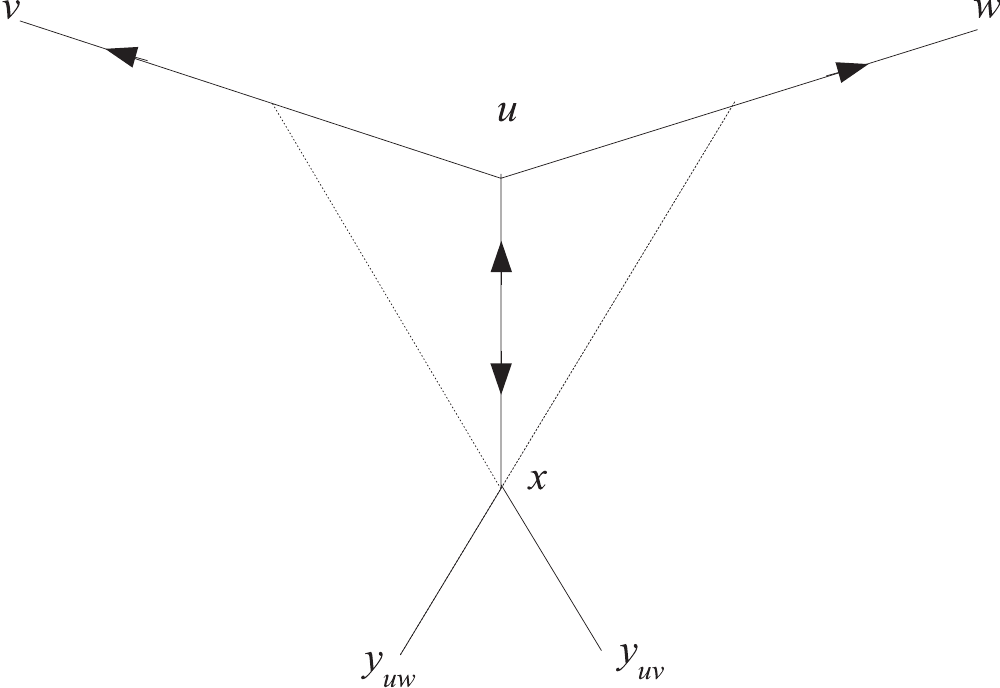}
 \caption{$f(uv) = f(uw) = \protect\overrightarrow{xu}$}
  \label{fig:CD4} 
\end{center}
\end{figure}

\begin{lemma}
\label{func}
Let $\Pi $ be a convex decomposition of $P$ with no deletable edges. If $U \neq \emptyset$, then there
is a function $g:U\rightarrow A\left( \overrightarrow{G\left( \Pi\right) }%
\right) $ such that for each $u\in U$, $g\left( u\right) $ is not in the
image of the function $f$.
\end{lemma}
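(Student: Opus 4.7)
The plan is to exploit the cardinality identity $\left|\func{Im}(f)\right|=|N|-|U|$ established in the paragraph immediately preceding the lemma, together with the fact that $N\subseteq A(\overrightarrow{G(\Pi)})$. Assuming $U\neq\emptyset$ gives $|U|\geq 1$, hence
\[
\left|\func{Im}(f)\right|\;=\;|N|-|U|\;\leq\;\left|A(\overrightarrow{G(\Pi)})\right|-|U|\;<\;\left|A(\overrightarrow{G(\Pi)})\right|,
\]
so $A(\overrightarrow{G(\Pi)})\setminus\func{Im}(f)$ is nonempty. I would pick any arc $\alpha$ in this complement and set $g(u)=\alpha$ for every $u\in U$; this defines a function with $g(u)\notin\func{Im}(f)$ as required.

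In fact $\left|A(\overrightarrow{G(\Pi)})\right|-\left|\func{Im}(f)\right|\geq |U|$, so the same argument even produces an injective $g$, by assigning distinct arcs of $A(\overrightarrow{G(\Pi)})\setminus\func{Im}(f)$ to the various elements of $U$. This injective version is what one would plug into a counting bound in the main theorem, if needed.

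So the lemma is essentially a one-line corollary of the counting paragraph preceding it, and there is no real obstacle. A more substantial proof would be forced only if one insisted on a canonical, ``local'' choice of $g(u)$ — for example $g(u)=\overrightarrow{ux}$, where $\overrightarrow{xu}$ is the common image of the pair $\overrightarrow{uv},\overrightarrow{uw}\in N$ out of $u$. The hard step of such a variant would be to exclude $\overrightarrow{ux}\in\func{Im}(f)$: one would suppose $\overrightarrow{ux}=f(\overrightarrow{xz})$ for some $\overrightarrow{xz}\in N$, apply Lemma~\ref{nocontr} to produce an empty triangle at $u$ inside one of the two faces incident to $ux$, and confront it with the empty triangles already witnessing $f(\overrightarrow{uv})=f(\overrightarrow{uw})=\overrightarrow{xu}$ in order to reach a contradiction. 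The pure counting proof avoids this case analysis altogether.
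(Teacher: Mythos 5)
Your counting argument does establish the lemma exactly as it is worded, but the wording undersells what the lemma must deliver, and the $g$ you produce is too weak to do its job in Theorem~\ref{either}. There the authors use two further properties of $g$: that $\left|\func{Im}\left(g\right)\right|=\left|U\right|$ and that $\func{Im}\left(g\right)\cap B=\emptyset$, where $B$ is the set of arcs whose head lies on the boundary of $CH\left(P\right)$. Your constant $g$ fails the first. Your injective variant fails the second, and necessarily so: in the setting of Theorem~\ref{either} one has $N=A\left(\overrightarrow{G\left(\Pi\right)}\right)$, hence $\left|A\left(\overrightarrow{G\left(\Pi\right)}\right)\setminus\func{Im}\left(f\right)\right|=\left|U\right|$ exactly, while $B\subseteq A\left(\overrightarrow{G\left(\Pi\right)}\right)\setminus\func{Im}\left(f\right)$ because every arc of $\func{Im}\left(f\right)$ has as its head a vertex of positive outdegree, which by Remark 1 cannot lie on the hull boundary. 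So any injective $g$ into the complement of $\func{Im}\left(f\right)$ must have image equal to that entire complement, which contains $B$; the bound $\left|\func{Im}\left(f\right)\right|\leq\left|A\left(\overrightarrow{G\left(\Pi\right)}\right)\right|-\left|\func{Im}\left(g\right)\cup B\right|$ then collapses to the identity $\left|N\right|-\left|U\right|\leq\left|N\right|-\left|U\right|$ and yields no contradiction. In short, the counting route is circular: the genuine content of the lemma is that the $\left|U\right|$ ``spare'' arcs can be chosen injectively \emph{and away from} $B$, and that is invisible to cardinality bookkeeping.

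The paper's proof is precisely the ``local'' variant you sketch and then set aside. For $u\in U$ with $f\left(\overrightarrow{uv}\right)=f\left(\overrightarrow{uw}\right)=\overrightarrow{xu}$, it takes $g\left(u\right)=\overrightarrow{zu}$ for a fourth neighbour $z\notin\left\{v,w,x\right\}$ when $u$ has degree larger than $3$ (this avoids $\func{Im}\left(f\right)$ because $\overrightarrow{xu}$ is the unique arc of $\func{Im}\left(f\right)$ with head $u$), and $g\left(u\right)=\overrightarrow{ux}$ when $u$ has degree $3$; in the latter case the geometric argument you anticipated, ruling out $\overrightarrow{ux}=f\left(\overrightarrow{xy_{uv}}\right)$ and $\overrightarrow{ux}=f\left(\overrightarrow{xy_{uw}}\right)$ by analysing the rays $r\left(y_{uv}x\right)$ and $r\left(y_{uw}x\right)$ on either side of the line through $u$ and $x$, is unavoidable. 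Since the head of each chosen arc ($u$ or $x$) has positive outdegree and is therefore an interior vertex, this $g$ avoids $B$, and its incidence with $u$ is what makes $\left|\func{Im}\left(g\right)\right|=\left|U\right|$ available. You correctly located where the difficulty lies; the case analysis you hoped to bypass is the proof.
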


\begin{proof}
Let $u\in U$ and let $v,w$ and $x=x_{uv}=x_{uw}$ be points in $P$ such that $%
f\left( \overrightarrow{uv}\right) =f\left( \overrightarrow{uw}\right) =%
\overrightarrow{xu}$. If $u$ has degree larger than 3 in $G\left( \Pi\right) $%
, let $z\notin \left\{ v,w,x\right\} $ be such that $uz$ is an edge of $%
G\left( \Pi\right) $. By Remark 1, the outdegree of $u$ in $\overrightarrow{%
G\left( \Pi\right) }$ is at most 2, therefore $\overrightarrow{uz}$ is not an
arc of $\overrightarrow{G\left( \Pi\right) }$. It follows that $%
\overrightarrow{zu}$ must be an arc of $\overrightarrow{G\left( \Pi\right) }$.
In this case $g\left( u\right) =\overrightarrow{zu}$ $\notin \func{Im}\left(
f\right) $ since $z \neq x$ and $\overrightarrow{xu}$ is the unique arc in $Im(f)$ that ends at $u$.

%since $f\left( \overrightarrow{a}\right) $ and $\overrightarrow{a%
%}$ form a directed path of length 2 for each arc $\overrightarrow{a}\in N$
%and the two arcs $\overrightarrow{uv}$ and $\overrightarrow{uw}$ that have
%initial vertex $u$ (terminal vertex of $\overrightarrow{zu}$) are such that $%
%f\left( \overrightarrow{uv}\right) =f\left( \overrightarrow{uw}\right) =%
%\overrightarrow{xu}\neq \overrightarrow{zu}$.

If $u$ has degree 3 in $G\left( \Pi \right) $, then $u$ has outdegree 3 in $%
\overrightarrow{G\left( \Pi\right) }$, by Remark 1 and therefore $%
\overrightarrow{ux}$ is an arc $\overrightarrow{G\left( \Pi \right) }$. We
claim that in this case $g\left( u\right) =\overrightarrow{ux}$ $\notin 
\func{Im}\left( f\right) $. Let $l_{ux}$ denote the line containing the edge 
$ux$, and let $y_{uv}$ and $y_{uw}$ be points in $P$ and such that the rays $%
r\left( y_{uv}x\right) $ and $r\left( y_{uw}x\right) $ intersect the edges $%
uv$ and $uw$, respectively.

Without loss of generality we assume that $l_{ux}$ is a vertical line such
that $v$ and $y_{uw}$ lie to the left of $l_{ux}$ and $w$ and $y_{uv}$ lie
to the right of $l_{ux}$ (see Fig. \ref{fig:CD4}). Clearly the angles $\measuredangle
y_{uv}xu$ and $\measuredangle y_{uw}xu$ are smaller than $\pi $, it is easy
to see that $\measuredangle y_{uw}xy_{uv}$ is also smaller than $\pi $.

Therefore if $xz$ is an edge of $\Pi $ with $z\notin \left\{
u,y_{uv},y_{uw}\right\} $, then $\overrightarrow{xz}$ is not an arc of $%
\overrightarrow{G\left( \Pi \right) }$. This implies that if $%
\overrightarrow{ux}\in \func{Im}\left( f\right) $, then $\overrightarrow{ux}%
=f\left( \overrightarrow{xy_{uv}}\right) $ or $\overrightarrow{ux}=f\left( 
\overrightarrow{xy_{uw}}\right) $ since $f\left( \overrightarrow{a}\right) $
and $\overrightarrow{a}$ form a directed path of length 2 for each arc $%
\overrightarrow{a}\in N$.

Suppose $\overrightarrow{ux}=f\left( \overrightarrow{xy_{uv}}\right) $. By
the definition of $f$, there is an edge $y_{xy_{uv}}u$ such that the ray $%
r\left( y_{xy_{uv}}u\right) $ intersects the edge $xy_{uv}$. Since $v$ and $%
w $ are the only vertices different from $x$ which are adjacent to $u$ in $%
G\left( \Pi \right) $, one of them must be the vertex $y_{xy_{uv}}$. Since
both edges $uw$ and $xy_{uv}$ lie in the right halfplane defined by $l_{ux}$
then $r\left( wu\right) $ cannot intersect the edge $xy_{uv}$ and therefore $%
y_{xy_{uv}}\neq w$. Finally, since $r\left( y_{uv}x\right) $ intersects the
edge $uv$, $r\left( vu\right) $ cannot intersect the edge $xy_{uv}$.
Therefore $\overrightarrow{ux}\neq f\left( \overrightarrow{xy_{uv}}\right) $; analogously $\overrightarrow{ux}\neq f\left( \overrightarrow{xy_{uw}}%
\right) $.
\end{proof}

\section{Main results}

In this section we prove our main results.

\begin{theorem}
\label{either}
Let $P$ be a set of points in general position in the plane. If $\Pi $ is a
convex decomposition of $P$ consisting of more than one polygon, then either 
$\Pi $ contains a deletable edge or $\Pi $ contains a contractible edge.
\end{theorem}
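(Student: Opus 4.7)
The plan is a proof by contradiction. Assume $\Pi$ has more than one polygon but contains neither a deletable nor a contractible edge. Two preliminary reductions are needed. First, if every point of $P$ lies on $CH(P)$ then any interior edge is a diagonal of the convex polygon $CH(P)$, whose removal merges two convex pieces into a convex polygon, so the edge is deletable; under the no-deletable hypothesis $\Pi$ would then have no interior edges, forcing $\Pi=\{CH(P)\}$ and contradicting $|\Pi|\geq 2$. So $P$ has an interior point. Second, some hull vertex $c$ must be incident to an interior edge $uc$ with $u$ interior: otherwise the hull cycle and the interior-vertex subgraph would form disjoint components of $G(\Pi)$, and the face inside the hull cycle would have a disconnected boundary, which is incompatible with each element of $\Pi$ being a simple convex polygon.

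Because no edge is contractible, every arc of $\overrightarrow{G(\Pi)}$ is in $N$, so $N=A(\overrightarrow{G(\Pi)})$. The counting preceding Lemma \ref{func} gives $|\func{Im}(f)|=|A|-|U|$, and Lemma \ref{func} provides $g:U\to A\setminus\func{Im}(f)$. I next show $g$ is injective by cases on the degrees of $u_1,u_2\in U$ in $G(\Pi)$. If both have degree $3$, then each $g(u_i)=\overrightarrow{u_i x_i}$ starts at $u_i$ and so $u_1=u_2$; if both have degree $>3$ (hence outdegree exactly $2$ by Remark 1), then each $g(u_i)=\overrightarrow{z_i u_i}$ ends at $u_i$ and again $u_1=u_2$. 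In the mixed case with $u_1$ of degree $3$ and $u_2$ of degree $>3$, equating $g(u_1)=\overrightarrow{u_1 x_1}$ with $g(u_2)=\overrightarrow{z_2 u_2}$ forces $x_1=u_2$ and $u_1=z_2$; then $\overrightarrow{u_2 u_1}=\overrightarrow{x_1 u_1}=f(\overrightarrow{u_1 v_1})$ must be an arc of $\overrightarrow{G(\Pi)}$, but the only outgoing arcs of $u_2$ go to $v_2,w_2$ and $u_1=z_2\notin\{v_2,w_2,x_2\}$ by the choice of $z_2$, contradiction. Injectivity together with $|\func{Im}(f)|=|A|-|U|$ gives $A=\func{Im}(f)\sqcup\func{Im}(g)$.

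The contradiction now comes from the arc $\overrightarrow{uc}$ produced in the preliminary reduction. It is an arc because the no-deletable hypothesis makes $Q_{uc}$ non-convex with at least one reflex endpoint, and $c$ cannot be reflex since its interior angle in any sub-face of $G(\Pi)-uc$ is bounded by its hull angle, which is less than $\pi$; so $u$ is reflex and $\overrightarrow{uc}\in A$. Now $\overrightarrow{uc}\notin\func{Im}(f)$, because every arc in $\func{Im}(f)$ ends at a vertex of positive outdegree while $c$ is a hull vertex and therefore has outdegree $0$ by Remark 1. And $\overrightarrow{uc}\notin\func{Im}(g)$: the degree-$>3$ case of $g$ produces arcs ending at the interior vertex $u'\in U$, never at $c$; the degree-$3$ case produces $g(u')=\overrightarrow{u' x'}$ with $\overrightarrow{x' u'}=f(\overrightarrow{u' v'})$ forced to be an arc, so $x'$ must have positive outdegree and hence be interior, ruling out $x'=c$. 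This contradicts $A=\func{Im}(f)\sqcup\func{Im}(g)$ and finishes the proof. The main obstacle is the mixed-degree step in the injectivity of $g$: one has to recognize the arc $\overrightarrow{u_2 u_1}$ forced by the definition of $f$ at $u_1$ and collide it with the outdegree-$2$ restriction at $u_2$ that Remark 1 imposes on vertices of degree larger than $3$.
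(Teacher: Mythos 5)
Your argument is correct and follows the paper's proof in all essentials: the same directed graph $\overrightarrow{G(\Pi)}$, the same functions $f$ and $g$, the same count $\left|\func{Im}(f)\right|=\left|N\right|-\left|U\right|$, and the same key observation that arcs ending at hull vertices (the paper's set $B$) cannot lie in $\func{Im}(f)$ or $\func{Im}(g)$. The only differences are that you explicitly verify two points the paper uses implicitly --- the injectivity of $g$ (the paper simply writes $\left|\func{Im}(g)\right|=\left|U\right|$) and the existence of at least one arc into a hull vertex (the paper asserts $\left|B\right|\geq 3$ without detail) --- so yours is a somewhat more fully justified version of the same proof rather than a different route.
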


\begin{proof}
Assume the result is false and $\Pi $ contains no deletable edges and no
contractible edges. Define the directed graph $\overrightarrow{G\left( \Pi
\right) }$ as in the previous section, notice that $A\left( \overrightarrow{%
G\left( \Pi \right) }\right) \neq \emptyset $ since $\Pi $ contains at least
two polygons. Since $\Pi $ contains no contractible edges, $N=A\left( 
\overrightarrow{G\left( \Pi \right) }\right) $.

Let $B=B\left( \overrightarrow{G\left( \Pi\right) }\right) $ be the set of
arcs of $\overrightarrow{G\left( \Pi\right) }$ of the form $\overrightarrow{uw}
$, with $w$ in the boundary of $CH\left( P\right) $, and let $%
\overrightarrow{uw}\in B$. By Remark 1, $w$ has outdegree 0 in $%
\overrightarrow{G\left( \Pi\right) }$ which implies $\overrightarrow{uw}$ $%
\notin $ $\func{Im}\left( f\right) $.

If $U=\emptyset$, then $\func{Im}\left( f\right) \subset A\left( \overrightarrow{G\left( \Pi\right) }
\right) \backslash B$, therefore $\left| N\right| = \left| \func{Im}\left( f\right) \right| \leq \left| A\left( \overrightarrow{G\left( \Pi\right) } \right) \backslash B \right| \leq  \left| A\left( \overrightarrow{G\left( \Pi\right) } \right) \right| -3$, which is not possible since $\Pi $ contains no deletable edges and $\left| B\right| \geq 3$.

And if $U\neq \emptyset$, by Lemma \ref{func} no arc in $\func{Im}\left( g\right) $ lies in $\func{Im}\left(
f\right) $, therefore $\func{Im}\left( f\right) \subset A\left( \overrightarrow{G\left( \Pi\right) }
\right) \backslash \left( \func{Im}\left( g\right) \cup B\right) $. In this case $\left| \func{%
Im}\left( f\right) \right| \leq \left| A\left( \overrightarrow{G\left(
\Pi\right) }\right) \right| -\left| \func{Im}\left( g\right) \right| -\left|
B\right| $, since $g\left( u\right) \notin B$. This is a contradiction since $A\left( \overrightarrow{G\left(
\Pi\right) }\right) =N$, $\left| \func{Im}\left( g\right) \right| =\left|
U\right| $, $\left| B\right| \geq 3$ and $\left| \func{Im}\left( f\right)
\right| =\left| N\right| -\left| U\right| $.
\end{proof}

\begin{corollary}
\label{onecontr}
Let $\Pi $ be a convex decomposition of a set of points $P$ in general
position in the plane. If $P$ contains at least one interior point, then $%
\Pi $ contains at least one contractible edge.
\end{corollary}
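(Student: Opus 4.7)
The plan is a proof by contradiction that iteratively invokes Theorem \ref{either}.

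Suppose for contradiction that $\Pi$ contains no contractible edge. Since $P$ has an interior point, $\Pi$ must consist of more than one polygon: otherwise $\Pi = \{CH(P)\}$ and the interior point would lie in the interior of the sole polygon, contradicting the convex decomposition definition. By Theorem \ref{either}, $\Pi$ then contains a deletable edge $e_1$. Deleting $e_1$ yields a convex decomposition $\Pi_1$ of $P$ with $G(\Pi_1) = G(\Pi) - e_1$; the set $\Pi_1$ still has an interior point, still consists of more than one polygon, and has strictly fewer edges than $\Pi$.

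Iterate the construction: while the current decomposition $\Pi_i$ has no contractible edge, Theorem \ref{either} yields a deletable edge $e_{i+1}$, and we set $\Pi_{i+1}$ to be the convex decomposition with $G(\Pi_{i+1}) = G(\Pi_i) - e_{i+1}$. Since the edge count strictly decreases at each step, the process terminates after finitely many steps at some $\Pi_k$ that has a contractible edge: either one appeared along the way, or at the final stage when no deletable edge of $\Pi_k$ remains (in which case Theorem \ref{either}, applied to $\Pi_k$ which still has more than one polygon, forces a contractible edge to exist).

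The remaining task, and the main obstacle, is to transfer a contractible edge of $\Pi_k$ back to the original $\Pi$. This would follow from an auxiliary claim: if $\Pi''$ is obtained from $\Pi'$ by removing a deletable edge $e$, then any contractible edge of $\Pi''$ is already contractible in $\Pi'$. By Lemma \ref{nocontr}, a hypothetical non-contractibility of $e^* = uv$ in $\Pi'$ would be witnessed by edges $yx$ and $xu$ in a common face of $G(\Pi')$ satisfying the ray-and-empty-triangle conditions; since no such witness exists in $G(\Pi'')$, any such witness must involve the re-added edge $e$ as either $yx$ or $xu$. The convexity of the merged face that arises when $e$ is removed (guaranteed by the deletability of $e$) should allow this witness in $\Pi'$ to be converted into a valid witness within $G(\Pi'')$, contradicting contractibility of $e^*$ there. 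Making this geometric substitution rigorous, via case analysis on how $e$ enters the blocker, is the technical heart of the proof, and on completion produces a contractible edge of $\Pi$, contradicting our standing assumption.
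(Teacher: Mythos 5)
Your reduction is sound as far as it goes, and it is essentially the paper's strategy: the paper simply deletes \emph{all} deletable edges at once to reach a decomposition $\Pi'$ with none remaining, applies Theorem \ref{either} there to get a contractible edge $uv$, and then transfers contractibility back to $\Pi$ in a single step, whereas you propose to do the same transfer one deleted edge at a time. The problem is that the transfer claim you defer --- ``a contractible edge of $\Pi''$ is already contractible in $\Pi'$ when $\Pi''=\Pi'-e$ for a deletable $e$'' --- is not a detail to be filled in later; it is the entire content of the paper's proof of this corollary, and your argument stops exactly where the real work begins. As written, this is a genuine gap.

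To close it you need to handle the two ways the removed edge $e$ can enter a Lemma \ref{nocontr} blocker $(yx,\,xu)$ for $uv$ in $\Pi'$. First, $e=xu$ is impossible: because the triangle $xu_tu$ is empty of points of $P$ and its sides $xu$ and $uu_t$ lie on edges, no edge at $x$ can enter the angular sector between $r(xu)$ and $r(xu_t)$, so the face of $G(\Pi')$ on the far side of $xu$ has angle more than $\measuredangle u_txu$ at $x$; adding the angle $\pi-\measuredangle u_txu$ of the blocker's face shows $x$ is a reflex vertex of the merged face $Q_{xu}$, i.e.\ $xu$ is \emph{not} deletable and cannot be the edge that was removed. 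Second, if $e=yx$, let $y'$ be the neighbour of $x$ on the merged face $R'$ of $G(\Pi'')$, which is convex and contains the old face; then $\measuredangle yxu\le\measuredangle y'xu<\pi$, so the ray $r(y'x)$ meets $uv$ at an interior point $u_{t'}$ lying between $u$ and $u_t$, and the triangle $xu_{t'}u$ is contained in the empty triangle $xu_tu$; thus $(y'x,\,xu)$ is a valid blocker inside a common face of $G(\Pi'')$, contradicting contractibility of $uv$ in $\Pi''$. This is exactly the argument the paper carries out (Fig.\ \ref{fig:CD5}); until both cases are written out, your proof proposal establishes only the easy combinatorial shell of the corollary.
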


\begin{proof}
Let $\Pi ^{\prime }$ be a convex decomposition of $P$ obtained from $\Pi $
by removing deletable edges, one at a time, until no such edges remain, and
let $\overrightarrow{G\left( \Pi ^{\prime }\right) }$ be the corresponding
directed abstract graph. Since $P$ contains an interior point, $\Pi ^{\prime
}$ contains at least one interior edge.

By Theorem \ref{either}, there is an arc $\overrightarrow{uv}\in A\left( 
\overrightarrow{G\left( \Pi^{\prime }\right) }\right) $ such that $uv$ is
contractible from $u$ to $v$ in $\Pi ^{\prime }$. If $uv$ is not
contractible in $\Pi $, then by Lemma 1 there are edges $yx$ and $xu$ lying
in a common face of $G\left( \Pi \right) $ such that the ray $r\left(
yx\right) $ meets the edge $uv$ at an interior point $u_t$ and that the
triangular region $yu_tu$ contains no point of $P$ in its interior. This
implies that the geometric graph $G\left( \Pi \right) -xu$ contains a face $%
Q_x$ in which $x$ is a reflex vertex and therefore $xu$ is not deletable in $%
\Pi $ and $\overrightarrow{xu}$ is an arc of $\overrightarrow{G\left(
\Pi\right) }$.

Let $R$ be the face of $G\left( \Pi \right) $ which contains both edges $yx$
and $xu$. Since $\Pi ^{\prime }$ is obtained from $\Pi $ by deleting edges
but no points, then there is a face $R^{\prime }$ of $G\left( \Pi ^{\prime
}\right) $ which contains the edge $xu$ and the region bounded by $R$, let $%
y^{\prime }\in P$ be such that $y^{\prime }x$ is an edge of $R^{\prime }$.
Notice that $y^{\prime }\neq y$ otherwise $uv$ could not be a contractible
edge of $\Pi ^{\prime }$ because the ray $r\left( yx\right) $ meets the edge 
$uv$ at the point $u_{t}$ (Fig. \ref{fig:CD5}, left). Nevertheless, since the face $%
R^{\prime }$ contains the edge $xu$ and the region bounded by $R$, the ray $%
r\left( y^{\prime }x\right) $ also meets the edge $uv$ at an interior point $%
u_{t^{\prime }}$ (Fig. \ref{fig:CD5}, right) which again is a contradiction.%
\end{proof}

\begin{figure}
\begin{center}
 \includegraphics[width=.9\textwidth]{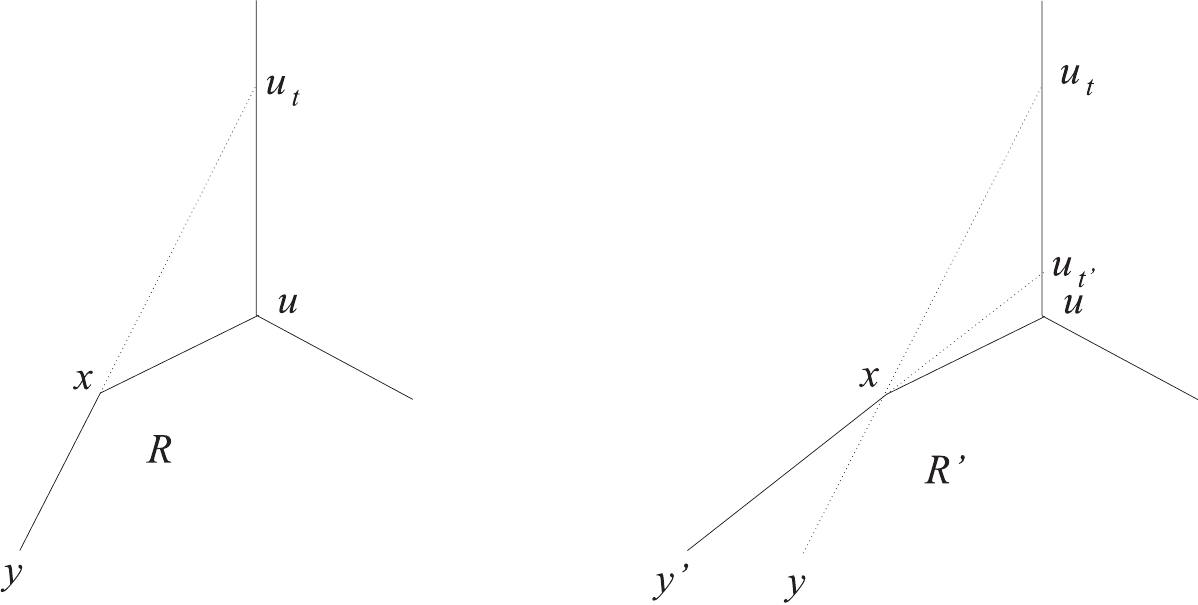}
 \caption{Left: Ray $r\left( yx\right) $ meets  edge 
$uv$ at the point $u_{t}$. Right: Ray $
r\left( y^{\prime }x\right) $ meets edge $uv$ at an interior point $
u_{t^{\prime }}$. }
  \label{fig:CD5} 
\end{center}
\end{figure}

\begin{corollary}
Let $\Pi $ be a convex decomposition of a set of points $P$ in general
position in the plane and $Q$ be the set of points in the boundary of $%
CH\left( P\right) $. There is a sequence $P=P_{0},P_{1},\ldots ,P_{m}=Q$ of
subsets of $P$, and a sequence $\Pi _{0},\Pi _{1},\ldots ,\Pi _{m}$ of
convex decompositions of $P_{0},P_{1},\ldots ,P_{m}$, respectively, such
that $\Pi _{0}=\Pi $, $\Pi _{m}$ consists of the boundary of $CH\left(
P\right) $ and for $i=0,1,\ldots ,k$, $\Pi _{i+1}$ is obtained from $\Pi _{i}
$ by contracting an edge and for $i=k+1,k+2,\ldots ,m-1$, $\Pi _{i+1}$ is
obtained from $\Pi _{i}$ by deleting an edge.\-
\end{corollary}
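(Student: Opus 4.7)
The plan is to perform the reduction in two phases, first eliminating interior points by contractions, then eliminating interior edges by deletions. Set $\Pi_0=\Pi$ and $P_0=P$. In \emph{Phase 1}, while $P_i\setminus Q\neq\emptyset$, apply Corollary \ref{onecontr} to $\Pi_i$ to obtain a contractible edge $u_iv_i$ of $\Pi_i$, and set $P_{i+1}=P_i\setminus\{u_i\}$ and $\Pi_{i+1}=\Pi_i/\overrightarrow{u_iv_i}$. The crucial observation is that the tail $u_i$ is necessarily an interior point of $CH(P)$: unwinding the proof of Corollary \ref{onecontr}, the contractible edge produced there arises as an arc $\overrightarrow{u_iv_i}$ of some directed skeleton $\overrightarrow{G(\Pi'_i)}$, whose tails lie in the interior of $CH(P)$ by Remark 1(2). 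Hence each contraction strips off one interior point while preserving $CH(P)$, and after exactly $k=|P\setminus Q|$ iterations we have $P_k=Q$.

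In \emph{Phase 2} every vertex of $\Pi_k$ lies on the boundary of $CH(P)$. I claim that every interior edge of $\Pi_k$ is then deletable. Indeed, if $uv$ were non-deletable, the merged face $Q_{uv}$ of $G(\Pi_k)-uv$ would be non-convex; but the only vertices whose internal angle in $Q_{uv}$ can differ from the angles in the two original convex faces are $u$ and $v$, and when both lie on $CH(P)$ their angles in $Q_{uv}$ are bounded above by the respective interior hull angles, hence strictly less than $\pi$, so $Q_{uv}$ is convex, a contradiction. This is the same geometric fact that underlies Remark 1(2). We therefore delete one interior edge at a time; each deletion yields a valid convex decomposition of $Q$, and the process ends when only the boundary of $CH(P)$ remains, giving $\Pi_m$.

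The main subtlety to justify carefully is ensuring in Phase 1 that no contraction removes a vertex of $Q$, since doing so would destroy a boundary point that must be kept in every $P_i$; this reduces, as indicated above, to invoking Remark 1(2) for the directed skeleton that witnesses contractibility inside the proof of Corollary \ref{onecontr}. Termination is straightforward in both phases: Phase 1 strictly decreases $|P_i|$, and Phase 2 strictly decreases the number of polygons of $\Pi_i$, stopping exactly when $\Pi_m=\{CH(P)\}$, which is the required conclusion.
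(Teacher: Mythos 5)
Your proof is correct and follows essentially the same two-phase strategy as the paper: contract edges via Corollary \ref{onecontr} while interior points remain, then delete interior edges once every vertex lies on the boundary of $CH(P)$. You supply justifications the paper leaves implicit (that the contracted tail is always an interior point, and that all interior edges become deletable in Phase 2), but the underlying argument is the same.
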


\begin{proof}
By Corollary \ref{onecontr} , if $P_i$ contains interior points, then $\Pi _i$ has a
contractible edge. If $P_i$ contains no interior points, then each interior
edge of $\Pi _i$ is a deletable edge.
\end{proof}

\newpage

\end{document}